\begin{document}

\newtheorem{theorem}{Theorem}%[section]
\newtheorem{corollary}[theorem]{Corollary}
\newtheorem{lemma}[theorem]{Lemma}
\newtheorem{proposition}[theorem]{Proposition}
\newtheorem{conjecture}[theorem]{Conjecture}
\newtheorem{commento}[theorem]{Comment}
\newtheorem{problem}[theorem]{Problem}
\newtheorem{remarks}[theorem]{Remarks}
\newtheorem{example}[theorem]{Example}

\theoremstyle{definition}
\newtheorem{definition}[theorem]{Definition}

\theoremstyle{remark}
\newtheorem{remark}[theorem]{Remark}

\newcommand{\Nb}{{\mathbb{N}}}
\newcommand{\Rb}{{\mathbb{R}}}
\newcommand{\Tb}{{\mathbb{T}}}
\newcommand{\Zb}{{\mathbb{Z}}}
\newcommand{\Cb}{{\mathbb{C}}}

\newcommand{\Af}{\mathfrak A}
\newcommand{\Bf}{\mathfrak B}
\newcommand{\Ef}{\mathfrak E}
\newcommand{\Gf}{\mathfrak G}
\newcommand{\Hf}{\mathfrak H}
\newcommand{\Kf}{\mathfrak K}
\newcommand{\Lf}{\mathfrak L}
\newcommand{\Mf}{\mathfrak M}
\newcommand{\Rf}{\mathfrak R}

\newcommand{\x}{\mathfrak x}

\def\A{{\mathcal A}}
\def\B{{\mathcal B}}
\def\C{{\mathcal C}}
\def\D{{\mathcal D}}
\def\E{{\mathcal E}}
\def\F{{\mathcal F}}
\def\G{{\mathcal G}}
\def\H{{\mathcal H}}
\def\J{{\mathcal J}}
\def\K{{\mathcal K}}
\def\LL{{\mathcal L}}
\def\N{{\mathcal N}}
\def\M{{\mathcal M}}
\def\N{{\mathcal N}}
\def\OO{{\mathcal O}}
\def\P{{\mathcal P}}
\def\Q{{\mathcal Q}}
\def\SS{{\mathcal S}}
\def\T{{\mathcal T}}
\def\U{{\mathcal U}}
\def\W{{\mathcal W}}

\def\ext{\operatorname{Ext}}
\def\clsp{\overline{\operatorname{span}}}
\def\Ad{\operatorname{Ad}}
\def\ad{\operatorname{Ad}}
\def\tr{\operatorname{tr}}
\def\id{\operatorname{id}}
\def\en{\operatorname{End}}
\def\aut{\operatorname{Aut}}
\def\out{\operatorname{Out}}
\def\coker{\operatorname{coker}}

\def\la{\langle}
\def\ra{\rangle}
\def\rh{\rightharpoonup}
\def\cl{\textcolor{blue}{$\clubsuit$}}

%Sophie new command
\newcommand{\Aut}[1]{\text{Aut}(#1)}
\newcommand{\Fn}{\mathcal{F}_n}
\newcommand{\On}{\mathcal{O}_n}
\newcommand{\Dn}{\mathcal{D}_n}
\newcommand{\Sn}{\mathcal{S}_n}

\newcommand{\norm}[1]{\left\lVert#1\right\rVert}
\newcommand{\inpro}[2]{\left\langle#1,#2\right\rangle}

\title[On Conjugacy of Subalgebras of Graph $C^*$-Algebras]{On Conjugacy of Subalgebras of Graph \\ $C^*$-Algebras}

%\author[T. Hayashi, J.H. Hong, S.E. Mikkelsen and W. Szyma\'{n}ski]{Tomohiro Hayashi, Jeong Hee Hong, Sophie Emma Mikkelsen and Wojciech Szyma{\'n}ski}

\medskip\noindent
\author[T. Hayashi]{Tomohiro Hayashi}\\
\address{Nagoya Institute of Technology\\
Gokiso-cho, Showa-k, Nagoya 466--8555, Japan}
\email{hayashi.tomohiro@nitech.ac.jp}

\author[J.H. Hong]{Jeong Hee Hong}
\address{Department of Data Information\\
  Korea Maritime and Ocean University\\
  Busan 49112, South Korea}
\email{hongjh@kmou.ac.kr}

\author[S.E. Mikkelsen]{\mbox{Sophie Emma Mikkelsen}}
\address{Department of Mathematics and Computer Science\\
  The University of Southern Denmark\\
  Campusvej 55, DK--5230 Odense M, Denmark}
\email{mikkelsen@imada.sdu.dk}

\smallskip\noindent
\author[W. Szyma\'{n}ski]{Wojciech Szyma{\'n}ski}
\address{Department of Mathematics and Computer Science\\
The University of Southern Denmark\\
Campusvej 55, DK--5230 Odense M, Denmark}
\email{szymanski@imada.sdu.dk}

\subjclass{Primary 46L05, 46L40}

\keywords{graph $C^*$-algebra;  Cuntz algebra; MASA;  UHF-subalgebra; automorphism, inner and outer conjugacy}

\thanks{J. H. Hong was supported by Basic Science 
Research Program through the National Researc Foundation of Korea (NRF) funded by te Ministry of Education, 
Science and Technology (Grant No. 2016R1D1A1B03930839).  
S. E. Mikkelsen and W. Szyma\'{n}ski were supported by  the DFF-Reesearch Project 2 on `Automorphisms and invariants of 
operator algebras', Nr. 7014--00145B.}

\date{January 31, 2019}

\begin{abstract}The problem of inner vs outer conjugacy of subalgebras of certain graph $C^*$-algebras is investigated. 
For a large class of finite graphs $E$, we show that whenever $\alpha$ is a vertex-fixing quasi-free automorphism of the 
corresponding graph $C^*$-algebra $C^*(E)$ such that $\alpha(\D_E)\neq\D_E$, where $\D_E$ is the canonical 
MASA in $C^*(E)$, then $\alpha(\D_E)\neq w\D_E w^*$ for all unitaries $w\in C^*(E)$. That is, the two MASAs 
$\D_E$ and $\alpha(\D_E)$ of $C^*(E)$ are outer but not inner conjugate. For the 
Cuntz algebras $\OO_n$, we find a criterion which guarantees that a polynomial automorphism moves the canonical UHF subalgebra 
to a non-inner conjugate UHF subalgebra. The criterion is phrased in terms of rescaling of trace on diagonal projections. 
\end{abstract}

\maketitle

%\renewcommand{\sectionmark}[1]{}

%%%%%%%%%%%%%%%%%%%%%%%%%%%%%%%%%%%%%%%%%%%%%%%%%
%%%%%%%%%%%%%%%%%%%%%%%%%%%%%%%%%%%%%%%%%%%%%%%%%

\section{Introduction}

Maximal abelian subalgebras (MASAs) have played very important role in the study of von Neumann algebras 
from the very beginning, and their theory is quite well developed by now. Theory of MASAs of 
$C^*$-algebras is somewhat less advanced, several nice attempts in this direction notwithstanding. Our particular 
interest lies in classification of MASAs in purely infinite simple $C^*$-algebras, and especially in Kirchberg algebras. 
In addition to its intrinsic interest, better understanding of MASAs in Kirchberg algebras could have significant consequences 
for the  classification of automorphisms and group actions on these algebras. In this context, we would like to 
single out the recent work of Barlak and Li, \cite{BaLi}, where a connection between the outstanding UCT problem for 
crossed products and existence of invariant Cartan subalgebras is investigated. 

It is a very difficult problem if two outer conjugate MASAs (that is, two MASAs $\Af$ and $\Bf$ for which there 
exists an automorphism $\sigma$ of the ambient algebra such that $\sigma(\Af)=\Bf$) of a purely infinite simple 
$C^*$-algebra are inner conjugate as well (that is, if there exists a unitary $w$ such that $w\Af w^*=\Bf$). This question was 
answered to the negative in \cite[Theorem 3.7]{CHS2015} for quasi-free automorphisms of the Cuntz algebras $\OO_n$. 

In the present paper, we extend the main result of \cite{CHS2015} to the case of purely infinite simple graph 
$C^*$-algebras $C^*(E)$ corresponding to finite graphs $E$. Namely, we show in Theorem \ref{mainqfree} below that 
every quasi-free automorphism of $C^*(E)$ either leaves the canonical MASA $\D_E$ globally invariant or moves it to 
another MASA of $C^*(E)$ which is not inner conjugate to $\D_E$. Although our Theorem \ref{mainqfree} is stated 
for quasi-free automorphisms only, it is in fact applicable to some other automorphisms as well. This is due to the fact 
that passing from one graph $E$ to another $F$ with the isomorphic algebra $C^*(F)\cong C^*(E)$ will often not preserve 
the property of an automorphism to be quasi-free. 
To make the present paper self-contained, we recall the necessary background on graph 
$C^*$-algebras and their endomorphisms in the preliminaries. 

The problem of conjugacy of subalgebras has been mostly investigated in the context of MASAs. However, it is very interesting 
for other types of subalgebras as well. In the present paper, we initiate systematic investigations of the outer vs inner conjugacy for the 
canonical UHF-subalgebra $\F_n$ of the Cuntz algebra $\OO_n$. More specifically, we address the question if $\F_n$ may be inner conjugate to 
$\lambda_u(\F_n)$, where $\lambda_u$ is a polynomial automorphism of $\OO_n$, building on the  first observations in this direction made 
in \cite{CHSam}.  Our results have clear potential for shedding more light on the mysterious structure of the outer automorphism group of 
$\OO_n$.  

%%%%%%%%%%%%%%%%%%%%%%%%%%%%%%%%%%%%%%%%%%%%%%%%%
%%%%%%%%%%%%%%%%%%%%%%%%%%%%%%%%%%%%%%%%%%%%%%%%%

\section{Preliminaries}

\subsection{Finite directed graphs and their $C^*$-algebras}

Let $E=(E^0,E^1,r,s)$ be a directed graph, where $E^0$ and $E^1$ are {\em finite} sets of vertices 
and edges, respectively, and $r,s:E^1\to E^0$ are range and source maps, respectively. 
A {\em path} $\mu$ of length $|\mu|=k\geq 1$ is a sequence 
$\mu=(\mu_1,\ldots,\mu_k)$ of $k$ edges $\mu_j$ such that 
$r(\mu_j)=s(\mu_{j+1})$ for $j=1,\ldots, k-1$. We view the vertices as paths of length $0$.
The set of all paths of length $k$ is denoted $E^k$, and $E^*$ denotes the collection of 
all finite paths (including paths of length zero). The range and source maps naturally 
extend from edges $E^1$ to paths $E^k$. A {\em sink} is a vertex $v$ which emits 
no edges, i.e. $s^{-1}(v)=\emptyset$. By a {\em cycle} we mean a path $\mu$ of 
length $|\mu|\geq 1$ such that $s(\mu)=r(\mu)$. 
A cycle $\mu=(\mu_1,\ldots,\mu_k)$ has an exit if there is a $j$ such that $s(\mu_j)$ 
emits at least two distinct edges. Graph $E$ is {\em transitive} if for any two vertices $v,w$ there exists 
a path $\mu\in E^*$ from $v$ to $w$ of non-zero length. Thus a transitive graph does not contain any 
sinks or sources. Given a graph $E$, we will denote by $A=[A(v,w)]_{v,w\in E^0}$ its {\em adjacency matrix}.  
That is, $A$ is a matrix with rows and columns indexed by the vertices of $E$, such that $A(v,w)$ is 
the number of edges with source $v$ and range $w$. 

The $C^*$-algebra $C^*(E)$ corresponding to a graph $E$ 
is by definition, \cite{KPRR} and \cite{KPR},  the universal $C^*$-algebra generated by mutually 
orthogonal projections $P_v$, $v\in E^0$, and partial isometries $S_e$, $e\in E^1$, 
subject to the following two relations: 
\begin{itemize}
\item[]
\begin{itemize}
\item[(GA1)] $S_e^*S_e=P_{r(e)}$,  
\item[(GA2)] $P_v=\sum_{s(e)=v}S_e S_e^*$ if $v\in E^0$ emits at least one edge. 
\end{itemize}
\end{itemize}
For a path $\mu=(\mu_1,\ldots,\mu_k)$ we denote by $S_\mu=
S_{\mu_1}\cdots S_{\mu_k}$ the corresponding partial isometry in $C^*(E)$.  
 We agree to write $S_v=P_v$ for a  $v\in E^0$.
Each $S_\mu$ is non-zero with the domain projection $P_{r(\mu)}$. 
Then $C^*(E)$ is the closed span of $\{S_\mu S_\nu^*:\mu,\nu\in E^*\}$.  
Note that $S_\mu S_\nu^*$ is non-zero if and only if 
$r(\mu)=r(\nu)$. In that case, $S_\mu S_\nu^*$ is a partial isometry with domain and range projections 
equal to $S_\nu S_\nu^*$ and $S_\mu S_\mu^*$, respectively. 

The range projections $P_\mu=S_\mu S_\mu^*$ of all 
partial isometries $S_\mu$ mutually commute, and the abelian $C^*$-subalgebra of $C^*(E)$ 
generated by all of them is called the diagonal subalgebra and denoted $\D_E$. 
We set $\D^0_E = {\rm span}\{P_v  :  v\in E^0  \}$ and, more generally, 
$\D_E^k= {\rm span}\{P_\mu  :  \mu\in E^k  \}$ for $k\geq 0$. $C^*$-algebra $\D_E$ coincides with the closed linear span of $\bigcup_{k=0}^\infty\D_E^k$. 
If $E$ does not contain sinks and all cycles have exits then 
$\D_E$ is a MASA (maximal abelian subalgebra) in $C^*(E)$ by \cite[Theorem 5.2]{HPP}. 
Throughout this paper, we make the following

\vspace{2mm}\noindent
{\bf standing assumption:}  all graphs we consider are transitive  
and all cycles in these graphs admit exits. 

\vspace{2mm}
There exists a strongly continuous action $\gamma$ of the circle group $U(1)$ on $C^*(E)$, 
called the {\em gauge action}, such that $\gamma_z(S_e)=zS_e$ and $\gamma_z(P_v)=P_v$ 
for all $e\in E^1$, $v\in E^0$ and $z\in U(1)\subseteq\Cb$. 
The fixed-point algebra $C^*(E)^\gamma$ for the gauge action is an AF-algebra, denoted 
$\F_E$ and called the core AF-subalgebra of $C^*(E)$. $\F_E$ is the closed span of 
$\{S_\mu S_\nu^*:\mu,\nu\in E^*,\;|\mu|=|\nu|\}$. For $k\in\Nb=\{0,1,2,\ldots\}$ 
we denote by $\F_E^k$ the linear span of $\{S_\mu S_\nu^*:\mu,\nu\in E^*,\;|\mu|=|\nu|= k\}$. 
$C^*$-algebra $\F_E$ coincides with the norm closure of $\bigcup_{k=0}^\infty\F_E^k$. 

We consider the usual {\em shift} on $C^*(E)$, \cite{CK}, given by
\begin{equation}\label{shift}
\varphi(x)=\sum_{e\in E^1} S_e x S_e^*, \;\;\; x\in C^*(E).  
\end{equation} 
In general, for finite graphs without sinks and sources, the shift is a unital, completely positive map. However, it 
is an injective $*$-homomorphism when restricted to the relative commutant $(\D_E^0)'\cap C^*(E)$. 

We observe that for each $v\in E^0$ projection $\varphi^k(P_v)$ is minimal in the center of $\F_E^k$. 
The $C^*$-algebra $\F_E^k\varphi^k(P_v)$ is the linear span of partial isometries $S_\mu S_\nu^*$ with 
$|\mu|=|\nu|=k$ and $r(\mu)=r(\nu)=v$. It is isomorphic to the full matrix algebra of size $\sum_{w\in E^0}
A^k(w,v)$. The multiplicity of $\F_E^k\varphi^k(P_v)$ in $\F_E^{k+1}\varphi^{k+1}(P_w)$ is $A(v,w)$, 
so the Bratteli diagram for $\F_E$ is induced from the graph $E$, see \cite{CK}, \cite{KPRR} or \cite{BPRSz}.                                                                  
We also note that the relative commutant 
of $\F_E^k$ in $\F_E^{k+1}$ is isomorphic to $\bigoplus_{v,w\in E^0}M_{A(v,w)}(\Cb)$. 

For an integer $m\in\Zb$, we denote by $C^*(E)^{(m)}$ the spectral subspace of the gauge 
action corresponding to $m$. That is,  
$$ C^*(E)^{(m)}:=\{x\in C^*(E) \mid \gamma_z(x)=z^m x,\,\forall z\in U(1)\}. $$ 
In particular, $C^*(E)^{(0)}=C^*(E)^\gamma$. 
There exist faithful conditional expectations $\Phi_{\F}:C^*(E)\to\F_E$ and $\Phi_{\D}:C^*(E)\to\D_E$ 
such that $\Phi_{\F}(S_\mu S_\nu^*)=0$ for $|\mu|\neq|\nu|$ and $\Phi_{\D}(S_\mu S_\nu^*)=0$ 
for $\mu\neq\nu$. Combining $\Phi_\F$ with a faithful conditional expectation from $\F_E$ onto $\F_E^k$, 
we obtain a faithful conditional expectation $\Phi_{\F}^k:C^*(E)\to\F_E^k$. Furthermore, for each 
$m\in\Zb$ there is a unital, contractive and completely bounded map $\Phi^m:C^*(E)\to C^*(E)^{(m)}$ 
given by 
\begin{equation}
\Phi^m(x) = \int_{z\in U(1)} z^{-m}\gamma_z(x)dz. 
\end{equation}
In particular, $\Phi^0=\Phi_\F$.
We have $\Phi^m(x)=x$ for all $x\in C^*(E)^{(m)}$.  If 
$x\in C^*(E)$ and $\Phi^m(x)=0$ for all $m\in\Zb$ then $x=0$.

%%%%%%%%%%%%%%%%%%%%%%%%%%%%%%%%%%%%%%%%%%%%%%%%%%%%

\subsection{Endomorphisms determined by unitaries}

Cuntz's classical approach to the study of endomorphisms of $\OO_n$, \cite{Cun}, has recently been 
extended to graph $C^*$-algebras in \cite{CHS2015} and \cite{AJSz}. In this subsection, we recall a few 
most essential definitions and facts about such endomorphisms. 

We denote by $\U_E$ the collection of all those unitaries in $C^*(E)$ which 
commute with all vertex projections $P_v$, $v\in E^0$. That is 
\begin{equation}\label{ue}
\U_E:=\U((\D_E^0)'\cap C^*(E)). 
\end{equation}
If $u\in\U_E$ then $uS_e$, $e\in E^1$, are partial isometries in $C^*(E)$ which together with 
projections $P_v$, $v\in E^0$, satisfy (GA1) and (GA2). Thus, by the universality of $C^*(E)$, 
there exists a unital $*$-homomorphism $\lambda_u:C^*(E)\to C^*(E)$ such 
that\footnote{The reader should be aware that in some papers (e.g. in \cite{Cun}) a 
different convention is used, namely $\lambda_u(S_e)=u^* S_e$.}
\begin{equation}\label{lambda}
\lambda_u(S_e)=u S_e \;\;\; {\rm and}\;\;\;  \lambda_u(P_v)=P_v, \;\;\;  {\rm for}\;\; e\in E^1,\; v\in E^0. 
\end{equation}
The mapping $u\mapsto\lambda_u$ establishes 
a bijective correspondence between $\U_E$ and the semigroup of those unital endomomorphisms 
of $C^*(E)$ which fix all  $P_v$, $v\in E^0$.
As observed in \cite[Proposition 2.1]{CHS2012}, if $u\in\U_E\cap\F_E$ then $\lambda_u$ 
is automatically injective. We say $\lambda_u$ is {\em invertible} if $\lambda_u$ is an automorphism of $C^*(E)$. 
We denote 
\begin{equation}
 \Bf := (\D_E^0)'\cap \F_E^1. 
\end{equation}
That is, $\Bf$ is the linear span of elements $S_e S_f^*$, $e,f\in E^1$, with $s(e)=s(f)$ and $r(e)=r(f)$. We note that 
$\Bf$ is contained in the multiplicative domain of $\varphi$ and we have $\D_E^1 \subseteq \Bf 
\subseteq \F_E^1$. If $u\in\U(\Bf)$ then $\lambda_u$ is automatically invertible with inverse 
$\lambda_{u^*}$ and the map 
\begin{equation}\label{quasifree}
\U(\Bf) \ni u\mapsto \lambda_u \in\aut(C^*(E))
\end{equation} 
is a group homomorphism with range inside 
the subgroup of {\em quasi-free automorphisms} of $C^*(E)$, see \cite{Z}. Note that this group is almost never trivial 
and it is non-commutative if graph $E$ contains two edges $e,f\in E^1$ such that $s(e)=s(f)$ and $r(e)=r(f)$. 

The shift $\varphi$ globally preserves $\U_E$, $\F_E$ and $\D_E$. For $k\geq 1$ we denote 
\begin{equation}\label{uk}
u_k := u\varphi(u)\cdots\varphi^{k-1}(u).   
\end{equation}
For each $u\in\U_E$ and all $e\in E^1$ we have $S_e u = \varphi(u) S_e$, and thus 
\begin{equation}\label{uaction}
\lambda_u(S_\mu S_\nu^*)=u_{|\mu|}S_\mu S_\nu^*u_{|\nu|}^* 
\end{equation}
for any two paths $\mu,\nu\in E^*$. 

%%%%%%%%%%%%%%%%%%%%%%%%%%%%%%%%%%%%%%%%%%%%%%%%%
%%%%%%%%%%%%%%%%%%%%%%%%%%%%%%%%%%%%%%%%%%%%%%%%%

\section{Quasi-free automorphisms}

In this section, we extend the main result of \cite{CHS2015}, applicable to the Cuntz algebras, to a much wider 
class of graph $C^*$-algebras. 

For the proof of Lemma \ref{keylemma}, below, we recall from Lemma 3.2 and Remark 3.3 in \cite{CHS2015} 
that if $x\in C^*(E)$, $x\geq 0$, and $x\D_E=\D_E x$ then $x\in\D_E$.

\begin{lemma}\label{keylemma}
Let $u\in\U(\Bf)$ be such that $u\D_E^1u^*\neq\D_E^1$, and let $x\in\F_E$ be 
arbitrary. If $x\lambda_u(\D_E) = \D_Ex$ then $x=0$.  
\end{lemma}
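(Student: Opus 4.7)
The plan is to derive a contradiction from the assumption $x\neq 0$.

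\emph{Step 1.} I would first establish that $xx^*\in\D_E$. Taking adjoints of the hypothesis gives $\lambda_u(\D_E)\,x^*=x^*\D_E$, so
\[
xx^*\,\D_E \;=\; x(x^*\D_E) \;=\; x\lambda_u(\D_E)\,x^* \;=\; \D_E\,xx^*.
\]
Since $xx^*$ is positive and commutes with $\D_E$, the fact recalled just above yields $xx^*\in\D_E$. Symmetrically, $x^*x\in\lambda_u(\D_E)$.

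\emph{Step 2.} For each $d\in\D_E$ choose $\rho(d)\in\D_E$ with $x\lambda_u(d)=\rho(d)\,x$; then $x\lambda_u(d)\,x^*=\rho(d)\,xx^*\in\D_E$. Applied to $d=P_e$ for $e\in E^1$, with $uS_e=\sum_g\alpha^e_g\,S_g$ (the sum running over edges $g$ with $s(g)=s(e)$ and $r(g)=r(e)$) and $uP_eu^*=\sum_{g,h}\alpha^e_g\,\overline{\alpha^e_h}\,S_gS_h^*$, this yields, for every $e\in E^1$,
\[
\sum_g|\alpha^e_g|^2\,xP_gx^* \;+\; \sum_{g\ne h}\alpha^e_g\,\overline{\alpha^e_h}\,xS_gS_h^*x^* \;\in\;\D_E.
\]
Applying the complementary projection $\id-\Phi_\D$ kills the diagonal part and leaves a linear relation on the ``off-diagonal'' unknowns $(\id-\Phi_\D)(xP_gx^*)$ and $(\id-\Phi_\D)(xS_gS_h^*x^*)$, one relation per edge $e\in E^1$.

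\emph{Step 3 (main obstacle).} The hypothesis $u\D_E^1 u^*\ne\D_E^1$ means that, in some block of $\Bf$ sitting above a pair $(v,w)$ with $A(v,w)\ge 2$, the matrix $(\alpha^e_g)_{g,e}$ is genuinely non-diagonal. Restricting Step~2's relations to edges in this block---supplemented by the analogous relations that come from $x\lambda_u(\D_E^k)\,x^*\subseteq\D_E$ for $k\ge 2$, which involve the iterates $u_k$ acting on the matrix units of $\F_E^k$---the non-diagonalness should provide enough independent constraints to force every off-diagonal component to vanish. The conclusion would be $x\,\F_E\,x^*\subseteq\D_E$, which, together with $xx^*\in\D_E$, $x\in\F_E$, and the MASA property of $\D_E$ inside $\F_E$, is incompatible with $x\ne 0$.

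The hard part is precisely Step~3: making rigorous how the non-diagonalness of $u$ and of its iterates $u_k$ supplies the needed rank. The delicate ingredient is the combinatorial bookkeeping, matching the nonzero pattern of the $\alpha^e_g$ (and of the analogous coefficients at level $k$) against the off-diagonal unknowns, across the various blocks of $\Bf$ and the levels of the tower $\F_E^k\subseteq\F_E^{k+1}$.
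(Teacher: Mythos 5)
Your Step 1 is correct and coincides with an ingredient of the paper's proof: the identity $xx^*\D_E=\D_Exx^*$ together with the recalled fact does give $xx^*\in\D_E$. But the proof as a whole has a genuine gap, and you have located it yourself: Step 3 is not an argument but a statement of hope. Nothing in Steps 2--3 explains why the relations $x\lambda_u(P_\mu)x^*\in\D_E$ force the off-diagonal components to vanish; indeed the bookkeeping looks unfavorable, since at level $k$ you obtain one constraint per path $\mu\in E^k$ while the unknowns $(\id-\Phi_\D)(xS_\mu S_\nu^*x^*)$ number on the order of $|E^k|^2$, so "enough independent constraints" is far from automatic. Moreover, passing from $x\lambda_u(d)=\rho(d)x$ to $x\lambda_u(d)x^*\in\D_E$ discards information (the identification of $\rho(d)$ with a specific element multiplying $x$ on the left), and it is exactly that discarded information the actual proof exploits. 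Finally, even granting Step 3, the closing claim that $x\F_Ex^*\subseteq\D_E$, $xx^*\in\D_E$ and $x\neq 0$ are incompatible is itself unproven; it amounts to showing that no nonzero hereditary subalgebra $\overline{xx^*\F_Exx^*}$ of $\F_E$ is abelian, which requires an argument using the standing assumptions on $E$.

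The paper's proof runs along a different, quantitative line that you may want to compare with. From $u\D_E^1u^*\neq\D_E^1$ one picks a vertex $v$ and a projection $p\in\D_E^1P_v$ whose image $upu^*$ stays at distance $\delta>0$ from all of $\D_E$ (not just from $\D_E^1$; this uses $\Phi_\F^1$). Since $u_k\in\F_E^k$ commutes with $\varphi^k(upu^*)\in(\F_E^k)'\cap\F_E^{k+1}$, one gets $\lambda_u(\varphi^k(p))=\varphi^k(upu^*)$, so the hypothesis yields $x\varphi^k(upu^*)=q_kx$ with $q_k\in\D_E$. The sequence $\varphi^k(upu^*)$ is asymptotically central in $\F_E$, hence $(\varphi^k(upu^*)-q_k)xx^*\to 0$; normalizing $\|x\|=1$ and using $xx^*\in\D_E$, one finds a projection $d\in\D_E^m$ with $\limsup_k\|(\varphi^k(upu^*)-q_k)d\|$ small. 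Transitivity of $E$ then produces a path $\mu\in E^k$ with $r(\mu)=v$ and $S_\mu S_\mu^*\leq d$, and compressing by $S_\mu$ recovers $\|upu^*P_v-S_\mu^*q_kS_\mu\|\geq\delta$, a contradiction. If you want to salvage your outline, the missing idea is precisely this: convert the algebraic hypothesis into a uniform distance $\delta$ witnessed at a single vertex, propagate it to all levels by the shift, and localize using $xx^*\in\D_E$ and a path into that vertex.
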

\begin{proof}
Suppose $x\in\F_E$ is such that $||x||=1$ and $x\lambda_u(\D_E) = \D_Ex$.  From this we will derive a 
contradiction. 

Since $u\D_E^1u^*\neq\D_E^1$, there exists a vertex $v\in E^0$ such that $u\D_E^1 u^* P_v \neq \D_E^1 P_v$. 
Thus, since $u\D_E^1P_vu^*=   u\D_E^1 u^* P_v$, we can take a projection $p\in\D_E^1P_v$ satisfying
$\delta := \inf\{ ||upu^* -q|| \mid q\in\D_E^1\} >0. $ Since $\Phi_{\F}^1(q')\in\D_E^1$, for all $q'\in\D_E$ we get 
$$ ||upu^*-q'|| \geq ||\Phi_\F^1(upu^*-q')|| = || upu^* - \Phi_\F^1(q') || \geq \delta. $$
By assumption, for each $k\in\Nb$ there is a $q_k\in\D_E$ such that 
\begin{equation}\label{leq1}
x\lambda_u(\varphi^k(p))=q_kx. 
\end{equation} 
Since $u_k\in\F_E^k$ and $\varphi^k(upu^*)\in\varphi^k(\Bf)=(\F_E^k)'\cap\F_E^{k+1}$, we have
\begin{equation}\label{leq2}
\lambda_u(\varphi^k(p)) = u_k\varphi^k(\lambda_u(p))u_k^* = u_k\varphi^k(upu^*))u_k^* = \varphi^k(upu^*). 
\end{equation}
Identities (\ref{leq1}) and (\ref{leq2}) combined yield 
$ 0 = x\lambda_u(\varphi^k(p)) - q_kx = x\varphi^k(upu^*) - q_kx.$ Since $upu^*\in\Bf$, 
the sequence $\{\varphi^k(upu^*)\}_{k=1}^\infty$ is central in $\F_E$. 
Therefore we have $ \lim_{k\to\infty}(\varphi^k(upu^*)-q_k)xx^*=0. $ It follows from the assumption on $x$ that 
$xx^*\D_E = \D_E xx^*$, and thus we may conclude that  $xx^*\in\D_E$. 

Now, take an arbitrary $\epsilon >0$. For a sufficiently large $m\in\Nb$, we have 
$$ \limsup_{k\to\infty} ||(\varphi^k(upu^*)-q_k)\Phi_\F^m(xx^*)|| \leq \epsilon \;\;\; \text{and} 
\;\;\; ||\Phi_\F^m(xx^*)|| \geq 1-\epsilon. $$
Thus we can find a projection $d\in\D_E^m$ such that $ \limsup_{k\to\infty} ||(\varphi^k(upu^*)-q_k)d|| 
\leq \frac{\epsilon}{1-\epsilon}. $

Since graph $E$ is transitive, for a sufficiently large $k\in\Nb$ we can find a path $\mu\in E^k$ such that 
$r(\mu)=v$ and $S_\mu S_\mu^* \leq d$. But now we see that 
$$ 3\epsilon \geq ||(\varphi^k(upu^*)-q_k)d|| \geq ||(\varphi^k(upu^*)-q_k)S_\mu S_\mu^*|| = 
|| upu^*P_v - S_\mu^*q_kS_\mu|| \geq \delta. $$
Since $\epsilon$ can be arbitrarily small, this is the desired contradiction. 
\end{proof}

Now, we are ready to prove our main result. 

\begin{theorem}\label{mainqfree}
Let $u\in\U(\Bf)$ be such that $u\D_E^1u^*\neq\D_E^1$. Then there is no non-zero element 
$x\in C^*(E)$ satisfying $x\lambda_u(\D_E) = \D_Ex$. In particular, there is no unitary $w\in C^*(E)$ such 
that $w\D_Ew^*=\lambda_u(\D_E)$. 
\end{theorem}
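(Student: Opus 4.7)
My plan is to reduce the full statement to Lemma \ref{keylemma} via the spectral decomposition of $x$ under the gauge action.  First, the ``In particular'' assertion is immediate from the main claim: for a unitary $w \in C^*(E)$ with $w\D_Ew^* = \lambda_u(\D_E)$, the element $x := w^*$ satisfies $x\lambda_u(\D_E) = w^*(w\D_Ew^*) = \D_Ew^* = \D_Ex$.  Hence the task is to show that no nonzero $x \in C^*(E)$ can satisfy $x\lambda_u(\D_E) = \D_Ex$.

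Given such $x$, set $y_m := \Phi^m(x) \in C^*(E)^{(m)}$.  Since $u \in \F_E$ is gauge-invariant, $\lambda_u$ commutes with the gauge action, and since both $\lambda_u(\D_E)$ and $\D_E$ lie in $\F_E = C^*(E)^{(0)}$, the map $\Phi^m$ is $\F_E$-bimodular.  Applying $\Phi^m$ to $x\lambda_u(d) = d'x$ therefore yields $y_m\lambda_u(d) = d'y_m$, and the parallel derivation on the opposite inclusion gives $y_m\lambda_u(\D_E) = \D_Ey_m$ for every $m$.  As noted in the preliminaries, $\Phi^m(x) = 0$ for all $m$ forces $x = 0$, so it suffices to show $y_m = 0$ for each $m$.

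For $m = 0$, Lemma \ref{keylemma} applies to $y_0 \in \F_E$ directly.  For $m > 0$, I exploit the partition $\sum_{|\mu|=m} S_\mu S_\mu^* = 1$ to write $y_m = \sum_{|\mu|=m} S_\mu a_\mu$ with $a_\mu := S_\mu^* y_m \in \F_E$ (and $a_\mu = P_{r(\mu)}a_\mu$).  Applying $S_\nu^*$ to $y_m\lambda_u(d) = d'y_m$, using the orthogonality $S_\nu^* S_\mu = \delta_{\nu,\mu} P_{r(\nu)}$ together with the identity $d'S_\mu = S_\mu(S_\mu^*d'S_\mu)$ valid for $d' \in \D_E$ (which follows from $(P_{s(\mu)} - P_\mu)S_\mu = 0$ and the commutativity of $\D_E$), one obtains $a_\nu\lambda_u(d) = (S_\nu^*d'S_\nu)a_\nu$, hence $a_\nu\lambda_u(\D_E) \subseteq \D_Ea_\nu$.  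The reverse inclusion follows from $S_\nu^*\D_ES_\nu = \D_E P_{r(\nu)}$ (a case analysis on spanning projections $P_\rho$ with respect to the paths $\nu, \rho$) combined with $a_\nu = P_{r(\nu)}a_\nu$.  Lemma \ref{keylemma} applied to each $a_\nu$ then forces $a_\nu = 0$, whence $y_m = 0$.  The case $m < 0$ reduces to $m > 0$ via the symmetry $\lambda_u^{-1} = \lambda_{u^*}$: the element $\lambda_{u^*}(y_m^*) \in C^*(E)^{(-m)}$ satisfies the analogous intertwining with $u^* \in \U(\Bf)$ in place of $u$, where $u^*\D_E^1 u \neq \D_E^1$ follows from $u\D_E^1 u^* \neq \D_E^1$ by conjugation.

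The main obstacle I expect is the $m > 0$ step: transferring the intertwining condition from $y_m$ to each component $a_\nu \in \F_E$ requires careful bookkeeping with the graph $C^*$-algebra identities, particularly the two facts $d'S_\mu = S_\mu(S_\mu^*d'S_\mu)$ and $S_\nu^*\D_ES_\nu = \D_E P_{r(\nu)}$, both of which encode the invariance of the MASA $\D_E$ under conjugation by the partial isometries $S_\mu$.  Everything else in the argument is essentially formal manipulation of the gauge grading plus invocation of Lemma \ref{keylemma}.
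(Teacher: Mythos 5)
Your proof is correct and follows essentially the same route as the paper: both reduce to Lemma \ref{keylemma} by cutting the gauge-spectral components of $x$ down to $\F_E$ with path isometries (the paper works with $\Phi_\F(S_\mu x)=S_\mu\Phi^{-m}(x)$, you with $S_\nu^*\Phi^m(x)$ — the same computation read from the other end), and both dispose of the remaining half of the spectral components via the adjoint-plus-$\lambda_{u^*}$ symmetry. The only difference is bookkeeping, not substance.
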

\begin{proof}
Let $x\in C^*(E)$ be such that $x\lambda_u(\D_E) = \D_Ex$. To verify that $x=0$, it suffices to show 
that $\Phi^m(x)=0$ for all $m\in\Zb$. 

We have $S_\mu^*\D_E S_\mu = P_{r(\mu)}\D_E $ for each $\mu\in E^*$. Thus $P_{r(\mu)}x
\lambda_u(\D_E) = P_{r(\mu)}\D_E x = S_\mu^*\D_E S_\mu x$, and hence $S_\mu x\lambda_u(\D_E) = 
\D_E S_\mu x$. Therefore by Lemma \ref{keylemma}, we get 
$$ \Phi_\F(S_\mu x) = 0 \;\;\; \text{for all} \;\;\; \mu\in E^*. $$
Let $m\in\Nb$. For a vertex $v\in E^0$ take a path $\mu\in E^m$ with $r(\mu)=v$. Then 
$0 = \Phi_\F(S_\mu x) = S_\mu\Phi^{-m}(x)$. Thus $P_v\Phi^{-m}(x)=0$, and summing over all $v\in E^0$ 
we see that $\Phi^{-m}(x)=0$ for all $m\in\Nb$. 

Now, taking adjoints of both sides of the identity $x\lambda_u(\D_E) = \D_Ex$ and then applying 
$\lambda_{u^*}=\lambda_u^{-1}$, we get $\lambda_{u^*}(x^*)\lambda_{u^*}(\D_E) = 
\D_E\lambda_{u^*}(x^*)$. Since $u^*\in\U(\Bf)$ and $u^*\D_E^1u\neq\D_E^1$, 
applying the preceding argument, we get $\Phi^{-m}(\lambda_{u^*}(x^*))=0$ for all $m\in\Nb$. But 
$\Phi^{-m}(\lambda_{u^*}(x^*)) = \lambda_{u^*}(\Phi^{-m}(x^*)) = \lambda_{u^*}(\Phi^{m}(x))$. 
Thus $\Phi^m(x)=0$ for all $m\in\Nb$, and the proof is complete. 
\end{proof}

\begin{corollary}
Let $u,v\in\U(\Bf)$ be such that $u\D_E^1u^*\neq v\D_E^1 v^*$. Then there is no unitary $w\in C^*(E)$ such 
that $w\lambda_u(\D_E)w^*=\lambda_v(\D_E)$. 
\end{corollary}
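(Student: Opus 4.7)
The plan is to reduce the corollary to Theorem \ref{mainqfree} by twisting the equation with the inverse quasi-free automorphism $\lambda_{v^*}=\lambda_v^{-1}$. The point is that the composition $\lambda_{v^*}\circ\lambda_u$ is again a quasi-free automorphism in our group, because the map $\U(\Bf)\to\aut(C^*(E))$, $u\mapsto\lambda_u$, is a group homomorphism. Setting $u' := v^* u\in\U(\Bf)$, we have $\lambda_{v^*}\circ\lambda_u=\lambda_{u'}$.

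Concretely, suppose for contradiction that a unitary $w\in C^*(E)$ satisfies $w\lambda_u(\D_E)w^*=\lambda_v(\D_E)$. Applying $\lambda_{v^*}$ to both sides gives
$$ \lambda_{v^*}(w)\,\lambda_{u'}(\D_E)\,\lambda_{v^*}(w)^* = \D_E, $$
so with $\tilde w:=\lambda_{v^*}(w)^*$ we obtain a unitary with $\tilde w\D_E\tilde w^*=\lambda_{u'}(\D_E)$. To invoke Theorem \ref{mainqfree} for $u'$, I need to verify $u'\D_E^1 u'^*\neq\D_E^1$. Since $v\in\U(\Bf)\subseteq\F_E^1$, on $\F_E^1$ the automorphism $\lambda_{v^*}$ acts simply by conjugation $x\mapsto v^* x v$, so
$$ u'\D_E^1 u'^* = v^* u\,\D_E^1\,u^* v = \lambda_{v^*}\!\bigl(u\D_E^1 u^*\bigr). $$
Because $\lambda_{v^*}(\D_E^1)=v^*\D_E^1 v$, the condition $u'\D_E^1 u'^*\neq\D_E^1$ is equivalent to $u\D_E^1 u^*\neq v\D_E^1 v^*$, which is exactly the hypothesis. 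Theorem \ref{mainqfree} then rules out the existence of $\tilde w$, yielding the desired contradiction.

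I do not anticipate a serious obstacle here: the whole content is the group-theoretic observation that the quasi-free action transports the problem of comparing two twisted MASAs $\lambda_u(\D_E)$ and $\lambda_v(\D_E)$ to the case already handled by the theorem, namely comparison of $\D_E$ with a single twisted MASA. The only small points to be careful about are that $\U(\Bf)$ is closed under taking the product $v^* u$ and that the condition on length-one diagonals transforms covariantly under $\lambda_{v^*}$; both are immediate from the definitions.
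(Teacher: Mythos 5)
Your reduction is correct: the paper states this corollary without proof, and the intended argument is exactly the one you give, namely applying $\lambda_{v^*}=\lambda_v^{-1}$, using the group homomorphism property of $\U(\Bf)\ni u\mapsto\lambda_u$ to get $\lambda_{v^*}\circ\lambda_u=\lambda_{v^*u}$, and checking that $(v^*u)\D_E^1(v^*u)^*\neq\D_E^1$ is equivalent to the hypothesis so that Theorem \ref{mainqfree} applies. No gaps.
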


%%%%%%%%%%%%%%%%%%%%%%%%%%%%%%%%%%%%%%%%%%%%%%%%%%%%%%%%%%%%%%%%%%%%%
%%%%%%%
%%%%%%%%%%%%%%%%%%%%%%%%%%%%%%%%%%%%%%%%%%%%%%%%%%%%

\section{Conjugacy by polynomial automorphisms of the UHF-subalgebra of the Cuntz algebra}

In this section we give a condition for inner conjugacy by polynomial automorphism of the core UHF-subalgebra
of the Cuntz algebra $\OO_n$, using the unique normalized trace on $\Fn$ which will be denoted by $\tau$. 

Let $W_n^k$ be the set of tuples $\mu=(\mu_1,...,\mu_k)$ where $\mu_i\in \{1,...,n\}$, and define 
$W_n=\bigcup_{k=0}^{\infty} W_n^k$ where $W_n^0=\{0\}$. We denote by $\Sn$ the group of unitaries in 
$\On$ which can be written as finite sums of words. Hence an element $u\in \Sn$ is of the form 
$u=\sum_{(\alpha,\beta)\in J} S_{\alpha}S_{\beta}^*$, where $J$ is a finite collection of pairs $(\alpha,\beta)$ 
with $\alpha,\beta\in W_n$. In the following we denote by $P(\Dn)$ the set of projections in $\Dn$. 

\begin{theorem}\label{UHFalgebra}
Let $u\in \Sn$ with $\lambda_u\in$ Aut$(\On)$. If there exists a sequence $\{P_n\}$ in $P(\Dn)$ such that  
\begin{equation}\label{trace}
\frac{\tau(\lambda_u(P_n))}{\tau(P_n)}\to \infty \ \text{or} \ 0,
\end{equation}
then for all $w\in\mathcal{U}(\On)$ and $v\in \mathcal{U}(\Fn)$ we have $\lambda_u\neq\Ad{w}\lambda_v$. 
This implies in particular that $\Fn$ and $\lambda_u(\Fn)$ are not inner conjugate. 
\end{theorem}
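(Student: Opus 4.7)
The plan is to argue by contrapositive: assume there exist $w\in\mathcal{U}(\OO_n)$ and $v\in\mathcal{U}(\F_n)$ with $\lambda_u=\Ad w\circ\lambda_v$, and derive that the ratio $\tau(\lambda_u(P))/\tau(P)$ is uniformly bounded above and below by positive constants as $P$ ranges over $P(\D_n)$. Since the hypothesis supplies a sequence $\{P_n\}$ along which this ratio escapes to $0$ or $\infty$, this gives the desired contradiction.

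First I would reduce to a comparison of two functionals on $\F_n$. Since $v\in\mathcal{U}(\F_n)$, the automorphism $\lambda_v$ restricts to an automorphism of the UHF algebra $\F_n\cong M_{n^\infty}$ and hence, by uniqueness of the tracial state, preserves $\tau$. Consequently, for any $P\in P(\D_n)\subseteq P(\F_n)$, setting $Q:=\lambda_v(P)\in P(\F_n)$ gives $\tau(Q)=\tau(P)$ and $\lambda_u(P)=wQw^*$. Interpreting $\tau$ on $\OO_n$ as the unique gauge-invariant KMS state $\omega=\tau\circ\Phi_\F$ (at inverse temperature $\log n$), the task becomes showing that $\omega(wQw^*)/\tau(Q)$ is bounded above and below uniformly in $Q\in P(\F_n)$.

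Next I would use the gauge spectral decomposition $w=\sum_{m\in\Zb}w_m$ with $w_m\in\OO_n^{(m)}$. Gauge invariance of $\omega$ gives $\omega(wQw^*)=\tau(\Phi_\F(wQw^*))=\sum_m\tau(w_mQw_m^*)$, and applying the KMS identity $\omega(xy)=n^{-m}\omega(yx)$ to the summand with $x=w_mQ$ of gauge degree $m$ (and $y=w_m^*$) produces the clean formula
\[
\omega(wQw^*)=\tau(T_wQ),\qquad T_w:=\sum_{m\in\Zb}n^{-m}w_m^*w_m\geq 0,
\]
with $\tau(T_w)=\omega(1)=1$. The sought comparison $c\,\tau(Q)\leq\omega(wQw^*)\leq C\,\tau(Q)$ is then equivalent to the operator inequality $c\cdot 1\leq T_w\leq C\cdot 1$ in $\F_n$.

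The hard part will be establishing this two-sided bound on $T_w$. For a Laurent-finite (polynomial) $w$ the defining sum is finite, $T_w$ is a bounded element of $\F_n$ with $\|T_w\|\leq\sum_{m\in F}n^{-m}$, and the upper bound is immediate; the lower bound would use the invertibility of $\lambda_u$ to exclude zero eigenvalues of $T_w$. For a general unitary $w$ the Fourier coefficients $w_m$ need not decay fast enough for $\sum n^{-m}w_m^*w_m$ to converge in norm, and one must instead exploit the relation $u=wv\,\varphi(w)^*$ — obtained from $\lambda_u(S_i)=w\lambda_v(S_i)w^*$ by multiplying on the right by $S_i^*$ and summing over $i$, using $\sum_i S_iS_i^*=1$ — to argue that $u\in\Sn$ being Laurent-finite transfers sufficient rigidity to $w$, either by reducing to polynomial $w$ up to a unitary in $\mathcal{U}(\F_n)$ or by forcing the Fourier decay needed to make $T_w$ a bounded and invertible element of $\F_n$. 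This structural step — transferring the polynomial rigidity of $u$ to the Fourier structure of $w$ — is the principal technical content of the proof.
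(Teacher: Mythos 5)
Your reduction is correct and in fact rather elegant: writing $\lambda_u(P)=wQw^*$ with $Q=\lambda_v(P)$, $\tau(Q)=\tau(P)$, and using the gauge decomposition $w=\sum_m w_m$ together with the KMS property of $\omega=\tau\circ\Phi_\F$ to get $\omega(wQw^*)=\tau(T_wQ)$ with $T_w=\sum_m n^{-m}w_m^*w_m$ is a clean way to see that everything hinges on a two-sided operator bound $c\leq T_w\leq C$. (Once $w$ is known to be a Laurent polynomial with degrees in $[-k,k]$, both bounds are actually immediate from $\sum_m w_m^*w_m=\Phi_\F(w^*w)=1$, which gives $n^{-k}\leq T_w\leq n^{k}$; you do not need any extra argument "excluding zero eigenvalues," nor the separate treatment of the $\to\infty$ case that the paper carries out by passing to $\lambda_u^{-1}=\lambda_{u'}$, $u'\in\Sn$.)

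However, the decisive step is exactly the one you leave open: proving that $w$ has a finite Fourier series (equivalently, that $T_w$ converges to a bounded, invertible element). You explicitly defer this as "the principal technical content," and the route you sketch --- extracting the relation $u=wv\varphi(w)^*$, i.e.\ $\varphi(w)=u^*wv$, and hoping that the Laurent-finiteness of $u$ propagates to $w$ --- is not carried out and is not obviously workable: since $\varphi$ preserves the gauge degree, this relation only yields a recursion among the Fourier components of $w$ with finitely many $u$-coefficients, and it does not in any evident way truncate the Fourier expansion of $w$. The paper closes precisely this gap by a different mechanism: from $\Ad w^*=\lambda_v\lambda_u^{-1}$ one gets $w^*\Dn w\subseteq\Fn$, and its Lemma \ref{lemmaUHF} shows that \emph{any} unitary conjugating $\Dn$ into $\Fn$ has a finite Fourier series (the proof produces orthogonal projections $d_k\in\Dn$ with $u_k=ud_k$ and shows $\|d_k\|\to 0$, forcing all but finitely many to vanish). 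Without this lemma, or a genuine substitute for it, your argument does not go through; with it, your KMS reformulation would give a slightly shorter and more symmetric finish than the paper's explicit Cauchy--Schwarz estimates on $\tau(w^*P_\mu w)$.
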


For the proof of Theorem \ref{UHFalgebra} we need the following result. 

\begin{lemma}\label{lemmaUHF}
If $u\in \mathcal{U}(\On)$ such that $u\Dn u^*\subseteq \Fn$ then $u$ has a finite Fourier series.  
%there exist $v\in \mathcal{U}(\Fn)$ and $w\in \Sn$ such that $u = vw$. 
\end{lemma}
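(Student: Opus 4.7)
The plan is to extract a finite Fourier expansion of $u$ by analyzing how the gauge action $\gamma$ interacts with the commutative algebra $u\Dn u^*$. Since $\Dn\subseteq\Fn=\On^\gamma$, the hypothesis $u\Dn u^*\subseteq\Fn$ forces $\gamma_z(udu^*)=udu^*$ for every $d\in\Dn$ and every $z\in U(1)$. Rewriting this as $\gamma_z(u)\,d\,\gamma_z(u)^*=udu^*$ shows that $u^*\gamma_z(u)$ commutes with $\Dn$; since $\Dn$ is a MASA in $\On$, we deduce $w_z:=u^*\gamma_z(u)\in\U(\Dn)$, and therefore
\begin{equation*}
v_z \;:=\; \gamma_z(u)u^* \;=\; u\,w_z\,u^* \;\in\; \U(u\Dn u^*).
\end{equation*}
Strong continuity of $\gamma$ makes $z\mapsto v_z$ norm continuous, and because $v_w\in\Fn$ a direct computation gives $v_{zw}=\gamma_z(v_w u)u^*=v_w\gamma_z(u)u^*=v_w v_z$, which equals $v_z v_w$ by commutativity of $u\Dn u^*$. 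Hence $z\mapsto v_z$ is a continuous group homomorphism $U(1)\to\U(u\Dn u^*)$.

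From here I would invoke Gelfand theory. The spectrum $X$ of $\Dn$ (hence of $u\Dn u^*$) is the totally disconnected compact Hausdorff space $\{1,\ldots,n\}^\Nb$. Continuous homomorphisms $U(1)\to\U(C(X))=C(X,U(1))$ correspond bijectively to continuous maps $X\to\widehat{U(1)}=\Zb$, and any such map has finite image $\{m_1,\ldots,m_N\}$ since $X$ is compact and $\Zb$ is discrete. This yields pairwise orthogonal projections $p_1,\ldots,p_N\in u\Dn u^*$ summing to $1$ with
\begin{equation*}
v_z \;=\; \sum_{j=1}^N z^{m_j}\,p_j \qquad (z\in U(1)).
\end{equation*}

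To finish, substitute $\gamma_z(u)=v_z u$ into the Fourier projection $\Phi^m(u)=\int_{U(1)} z^{-m}\gamma_z(u)\,dz$; orthogonality of the characters $z\mapsto z^k$ then yields $\Phi^m(u)=p_j u$ when $m=m_j$ and $\Phi^m(u)=0$ otherwise. Consequently $u=\sum_{j=1}^N p_j u$ is the asserted finite Fourier series. The conceptual crux — and the step requiring the most care — is the initial translation of $u\Dn u^*\subseteq\Fn$ into the statement that $v_z$ is a continuous one-parameter group valued in the commutative algebra $u\Dn u^*$; once this is in place, total disconnectedness of the spectrum of $\Dn$ makes the reduction to finitely many characters automatic, and the finite Fourier series falls out immediately.
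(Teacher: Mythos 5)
Your proof is correct, and it reaches the conclusion by a genuinely different route from the paper, although both arguments pivot on the same key observation. The paper works coefficientwise: it shows that each Fourier component $u_k=\Phi^k(u)$ satisfies $u^*u_k\in\Dn'\cap\On=\Dn$ (the MASA step), deduces $u_k=ud_k$ for mutually orthogonal projections $d_k\in\Dn$, and then obtains finiteness by an approximation argument --- $\|u_k\|\to 0$ because $u$ can be approximated by finite sums of words, while nonzero projections have norm $1$, so only finitely many $d_k$ survive. You instead integrate the same MASA observation into a global object: $v_z=\gamma_z(u)u^*$ is a norm-continuous one-parameter group in $\U(u\Dn u^*)\cong C(X,U(1))$, which by pointwise evaluation and the $2$-separation of distinct characters of $U(1)$ is given by a locally constant map $X\to\Zb$ with finite image by compactness; the resulting spectral projections $p_j$ are exactly $ud_{m_j}u^*$ in the paper's notation, and the Fourier computation at the end is the same. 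Your version buys a cleaner conceptual explanation of \emph{why} the series is finite (compactness of the spectrum versus discreteness of $\widehat{U(1)}$) and identifies all the nonzero Fourier components at once, at the cost of the slightly under-justified duality step: you should note explicitly that $x\mapsto m(x)$ is continuous because joint continuity of $(z,x)\mapsto v_z(x)$ on the compact space $U(1)\times X$ forces $x\mapsto v_{\cdot}(x)$ to be sup-norm continuous, and distinct characters are at uniform distance $2$; total disconnectedness of $X$ is not actually needed, since level sets of a continuous $\Zb$-valued map are automatically clopen. The paper's argument is more elementary and self-contained but hides the group structure you make explicit.
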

\begin{proof}
Let $u$ have the Fourier series $ \sum_{k\in\mathbb{Z}} u_k, \ u_k:=\Phi^k(u).$
Let $P\in \Dn$ and fix $k\in \mathbb{Z}$ then 
$$
\begin{aligned}
uPu^*u_k = \int_{U(1)} z^{-k} \gamma_z(uPu^*u) dz  = \int_{U(1)} z^{-k} \gamma_z(uP) dt = u_kP .
\end{aligned}
$$
Where we have used that $u\Dn u^*\subseteq \Fn$ to write $uPu^* \gamma_z(u) =\gamma_z(uPu^*u)$.  

It follows that $P(u^*u_k)=(u^*u_k)P$ for $P\in \Dn$. Hence $u^*u_k\in \Dn' \cap \On$ and since $\Dn$ is a MASA there exists a $d_k\in\Dn$ 
such that $u_k=ud_k$. Now we have 
$$
u_k= \int_{U(1)} z^{-k} \gamma_z(u_k) dz = \int_{U(1)} z^{-k} \gamma_z(ud_k) dz 
= u_k d_k. 
$$
Hence $ud_k=ud_kd_k$ and $d_k=d_k^2$, which shows that $d_k$ is a projection in $\Dn$. 
For $k,m\in\mathbb{Z}$ with $k\neq m$ we have 
$$
0= \int_{U(1)} \gamma_z(u_k^*u_m)dz = \int_{U(1)} \gamma_z(d_k^*d_m)dz = d_kd_m.
$$
Hence $\{d_k\}$ are mutually orthogonal projections in $\Dn$. 

There exists an element 
of the form $w = \sum_j t_j S_{\alpha_j}S_{\beta_j}^*$ (finite sum) $t_j\in \mathbb{C}$,
such that $ || u-w || < \epsilon $. Also, there exists $N\in\mathbb{Z}$ such that for $k>N$ 
we have $\Phi^k(w)=0$, since $w$ has a finite Fourier series. Then
$$ \norm{u_k} = || \Phi^k(u) || = || \Phi^k(u-w) || \leq || u-w || < \epsilon, \   \text{for}\; k>N. $$ 
Hence $\norm{u_k}\to 0$ and thus
$\norm{d_k}\to 0$. Since $d_k$ are all projections the series $\{d_k\}$ is finite, concluding that $u$ has 
a finite Fourier series since $u_k=ud_k$. 
\end{proof}

\begin{proof}[Proof of Theorem \ref{UHFalgebra}]
We recall that 
if $\alpha\in$ Aut$(\On)$ then $\Fn$ and $\alpha(\Fn)$ are inner conjugate if and only if there exists 
$w\in \mathcal{U}(\On)$ such that $\alpha(\Fn)=\Ad{w}(\Fn)$. This is equivalent to that Ad$(w^*)\alpha(\Fn)=\Fn$. 
If we have an automorphism which globally preserves $\Fn$ then Ad$(w^*)\alpha$ is equal $\lambda_v$ for some 
$v\in \mathcal{U}(\Fn)$ \cite[Proposition 1.2(b)]{Cun},\cite[Proposition 3.3]{CS} therefore $\alpha=\Ad{w}\lambda_v$. 
Hence $\Fn$ and $\alpha(F_n)$ are inner conjugate if and only if there exist $w\in \mathcal{U}(\On)$ 
and $v\in \mathcal{U}(\Fn)$ s.t. $\alpha=\Ad w \lambda_v$. 

Assume that $\lambda_u=\Ad{w}\lambda_v$ for $w\in \mathcal{U}(\On)$ and $v\in \mathcal{U}(\Fn)$. 
We will assume $w\notin \Fn$ otherwise $\Ad{w}$ is trace preserving which contradicts  (\ref{trace}) since $\lambda_v$ 
is also trace preserving. We wish to proof that there are no sequence of projection such that we get (\ref{trace}). 

Since $\lambda_u\in\Aut{\On}$ we have $\lambda_v\in\Aut{\On}$ and $\Ad{w^*}=\lambda_v\lambda_u^{-1}$. 
Therefore $\Ad{w^*}(\Dn)\subseteq\Fn$ \cite[Proposition 3.3]{CS}. We wish to show that there exists $M>0$ such that 
\begin{equation}\label{Mequality}
\frac{\tau(w^*Pw)}{\tau(P)}\leq M, \  \text{for} \; P\in P(\Dn). 
\end{equation}
By Lemma \ref{lemmaUHF}, $w^*$ has a finite Fourier series and we can write 
$w^*=\sum_{j=1}^k {S_v^*}^jx_{-j}+x_0+\sum_{j=1}^k x_jS_v^j$, with $v\in \{1,2,...,n\}$ and $x_0,x_{\pm j}\in \Fn$. 

For any word $\mu$ with $|\mu|>k$ and $1\leq j\leq k$, we can write $\mu=\nu_j\mu_j$ such that 
$|\nu_j|=j$. Then $\tau(P_\mu)=\dfrac{1}{n^{|\mu|}}=
\dfrac{1}{n^j}\tau(P_{\mu_j})\geq \dfrac{1}{n^k}\tau(P_{\mu_j}).$ 

The only parts contributing to $\tau(w^*Pw)$ are
$$
\sum_{j=1}^k {S_v^*}^jx_{-j}Px_{-j}^*S_v^j, \ \ \sum_{j=1}^k x_jS_v^j P{S_v^*}^jx_j^*, \ \ x_0Px_0^*.
$$
We have ${S_v^*}^j x_{-j}S_{\nu_j}\in \Fn$ because $|v_j|=j$. Hence  
$$
\tau( {S_v^*}^j x_{-j}P_\mu x_{-j}^* S_v^j)
=\tau( (S_{\nu_j}^* x_{-j}^*S_v^j{S_v^*}^j x_{-j}S_{\nu_j})P_{\mu_j}).
$$ 
Note that ${S_v^j}^*x_{-j}$ is contractive since $\Phi^{-j}(w^*)={S_v^j}^*x_{-j}$ and $\Phi^j$ is contractive. 
By the Cauchy-Schwarz inequality we get $\tau( {S_v^*}^j x_{-j}P_\mu x_{-j}^* S_v^j)\leq \tau(P_{\mu_j})
\leq n^k \tau(P_\mu).$ On the other hand, using that $\Phi^j(w^*)=x_jS_v^j$ is contractive and the 
Cauchy-Schwarz inequality we have
$$
\begin{aligned}
\tau(x_jS_v^jP_\mu {S_v^*}^jx_j^*)&=
\tau((x_jS_v^j{S_v^j}^*)(S_v^jP_\mu {S_v^*}^j)(S_v^j{S_v^j}^*x_j^*)) 
\\
&\leq \tau(S_v^jP_\mu {S_v^*}^j)\leq \tau(P_\mu). 
\end{aligned} 
$$
We also have $\tau(x_0^*P_{\mu}x_0)=\tau(x_0x_0^*P_{\mu})\leq \tau(P_{\mu})$ since $\Phi^0(w^*)=x_0$ 
which is contractive. Hence there exists a constant $M>0$ satisfying $\tau(w^*P_\mu w)\leq M\tau(P_\mu) $ 
for any word $\mu$ with $|\mu|>k$. If $|\mu|\leq k$ we can extend the length until it is greater than $k$ using that 
$\sum_{i=1}^n S_iS_i^*=1$. Hence $\tau(w^*P w)\leq M\tau(P)$ for any $P\in P(\Dn)$. Since $\lambda_v$ is 
trace preserving we have
\begin{equation}\label{tracefraction}
\frac{\tau(\Ad{w^*}\lambda_u(P))}{\tau(P)}= \frac{\tau(\lambda_v(P))}{\tau(P)}=1, \ \text{for}\; P\in P(\Dn).  
\end{equation}
Hence $\frac{1}{M}\leq \frac{\tau(\lambda_u(P))}{\tau(P)}$ for $P\in P(\Dn)$ since 
$\frac{\tau(P)}{\tau(\lambda_u(P))}=\frac{\tau(\text{Ad}w^*(\lambda_u(P))}{\tau(\lambda_u(P))}\leq M$ for $P\in P(\Dn). $

Then there are no sequence of projections $\{P_n\}$ such that $\frac{\tau(\lambda_u(P_n))}{\tau(P_n)}$ tends to $0$. 
To show that it has no sequence of projections such that the limit is infinity we use that $\lambda_{u}(\Fn)$ is inner conjugate 
to $\Fn$ if and only if $\lambda_u^{-1}(\Fn)$ is inner conjugate to $\Fn$. Indeed, if $\lambda_u(\Fn)=w\Fn w^*$ then 
$\Fn=\lambda_u^{-1}(w)\lambda_u^{-1}(\Fn)\lambda_u^{-1}(w^*)$. 

Now, assume that there exists a sequence $\{P_n\}$ such that 
$\frac{\tau(\lambda_u(P_n))}{\tau(P_n)}\to \infty$, then 
$\frac{\tau(Q_n)}{\tau(\lambda_u^{-1}(Q_n))}\to \infty$ where $Q_n=\lambda_u(P_n)$. But then $\{Q_n\}$ is 
a sequence of projections such that  $\frac{\tau(\lambda_u^{-1}(Q_n))}{\tau(Q_n)}\to 0$. Since 
for $u\in \Sn$ we have $\lambda_u^{-1}=\lambda_v$ for some $v\in\Sn$, \cite[Theorem 2.1]{CS2}, we can use the same argument 
as above to show that there exists $M'>0$ such that $\frac{1}{M'}\leq \frac{\tau(\lambda_u^{-1}(P))}{\tau(P)}$ for $P\in P(\Dn)$. 
The claim follows.
\end{proof}

\begin{example}{\rm  Let $w=S_{22}S_{212}^*+S_{212}S_{22}^*+P_{211}+P_1\in \mathcal{S}_2$ then  
$$
\lambda_w(S_1)=S_1, \ \ \lambda_w(S_2)=S_2(S_2S_{12}^*+S_{12}S_2^*+P_{11}).
$$
Let $u=S_2S_{12}^*+S_{12}S_2^*+P_{11}\in \mathcal{S}_2$, note that $u\in\mathcal{U}(C^*(S_1))$. Let $\alpha_u$ be defined by $\alpha_u(S_1)=S_1$ and $\alpha_u(S_2)=S_2u$, then $\alpha_u$ is an automorphism with inverse $\alpha_{u^*}$ and $\lambda_w=\alpha_u$. Hence $\lambda_w$ is an automorphism of $\mathcal{O}_2$. Consider $\beta_k=(22...2)$, which is a word of length $k$ only containing $2'$s and let $\gamma_k=(21212...12)$ be a word of length $2k-1$ then 
$$
\lambda_w(P_{\beta_k})= S_2uS_2uS_2u\cdots S_2uu^*S_2^*u^*S_2^*\cdots u^*S_2^*= 
P_{\gamma_k}
$$
since $uS_2=S_{12}$. 
We then have 
$\frac{\tau(\lambda_w(P_{\beta_k}))}{\tau(P_{\lambda_k})}=\frac{1}{2^{k-1}}$
which tends to $0$ as $k\to \infty$. Hence $\mathcal{F}_2$ and $\lambda_w(\mathcal{F}_2)$ are not inner conjugate by Theorem \ref{UHFalgebra} using the sequence $\{P_{\beta_k}\}$. }
\end{example}

%%%%%%%%%%%%%%%%%%%%%%%%%%%%%%%%%%%%%%%%%%%%%%%%%%%%
%%%%%%%%%%%%%%%%%%%%%%%%%%%%%%%%%%%%%%%%%%%%%%%%%%%%

\end{document}